\documentclass[11pt,twoside,a4paper]{article}

\usepackage{amssymb}
\usepackage{amsmath}
\usepackage{amsthm}
\usepackage{color}
\usepackage{mathrsfs}

\usepackage{amsfonts}
\usepackage{amssymb}
\usepackage{amsmath}
\usepackage{amsthm}
\usepackage{bbm}
\usepackage{verbatim}
\usepackage{url}

\allowdisplaybreaks

\pagestyle{myheadings}\markboth{B. Jaye and M. N. Vempati}{Even SIOs on rough sets}

\textwidth =158mm
\textheight =230mm
\oddsidemargin 2mm
\evensidemargin 2mm
\headheight=13pt
\setlength{\topmargin}{-0.6cm}

\parindent=13pt

\newcommand{\eps}[0]{\varepsilon}

\def\supp{{\mathop\mathrm{\,supp\,}}}
\def\dist{{\mathop\mathrm{\,dist\,}}}

\newtheorem{thm}{Theorem}[section]
\newtheorem{lem}[thm]{Lemma}
\newtheorem{prop}[thm]{Proposition}
\newtheorem{clm}[thm]{Claim}

\newcommand{\R}{\mathbb R}

\numberwithin{equation}{section}

\begin{document}
\arraycolsep=1pt

\title{\Large\bf Even singular integral operators that are well behaved on a purely unrectifiable set }
\author{Benjamin Jaye and Manasa N. Vempati}

\date{}
\maketitle

\begin{center}
\begin{minipage}{13.5cm}\small

{\noindent  {\bf Abstract:} We prove the existence of a $(d-2)$-dimensional purely unrectifiable set upon which a family of \emph{even} singular integral operators is bounded.
}

\end{minipage}
\end{center}

\section{Introduction}

Understanding the geometry of a measure $\mu$ for which an associated \emph{odd} singular integral operator is bounded in $L^2(\mu)$ is a heavily studied problem in harmonic analysis. For instance, the question of whether the $L^2(\mu)$ boundedness for $s$-dimensional Riesz transforms implies the rectifiability (and closely related conditions) of the measure $\mu$, is known as the David-Semmes question \cite{DS}. This is only known when $s=1$ \cite{mmv, dav1, ler} and $s=d-1$ \cite{env2, ntov, dt, to}. There are several positive and negative results that have been proved for a wide variety of odd kernels, see e.g. \cite{Ch, cpmt, hu, JN, MP}.

Taking our inspiration from \cite{MOV, MOV2}, the goal of this note is to take a first step in the study of the problem for \emph{even kernels} by characterizing the even kernels that can be bounded in $L^2$ on a particular class of purely unrectifiable sets, namely, the natural analogues of the sets first considered in $\R^2$ in \cite{JN} and recently generalized by \cite{MP} to co-dimension one sets in $\R^d$.



Let us recall that a set $A\subset \mathbb{R}^d$ is called \emph{$m$-rectifiable} if there are Lipschitz maps $f_i: \R^m \rightarrow \R^d $ for all $i = 1,2,...,$
such that

\begin{equation*}
    \mathscr{H}^m(A\setminus\cup f_i(\R^m)) =0.
\end{equation*}

(Here $\mathscr{H}^m$ denotes the $m$-dimensional Hausdorff measure.)  In contrast, a set $B$ is \emph{$m$-purely unrectifiable} if $ \mathscr{H}^m(B\cap A) =0$ for every $m$-rectifiable set $A \subset \R^d$. 

A measure $\mu$ is said to have \emph{$m$-growth} if there exists a constant $C>0$, such that $\mu(B(x,R))\leq Cr^{m}$ for every ball $B(x,r) \subset \mathbb{R}^d$. Finally, we say $\mu$ is an \emph{$m$-dimensional measure} if $\mathscr{H}^{m}(\supp{\mu})< \infty$ and $\mu$ has $m$-growth.

Throughout the paper, we will denote by $\Omega$ an even H\"{o}lder continuous function on $\mathbb{S}^{d-1}$ with $\int_{\mathbb{S}^{d-1}}\Omega(\xi)d\mathscr{H}^{d-1}(\xi)=0.$ For a finite measure $\mu$, the $m$-dimensional SIO associated to $\Omega$ is bounded in $L^2(\mu)$ if there is a constant $C>0$ such that for every $f\in L^2(\mu)$
$$\sup_{\varepsilon>0}\int_{\mathbb{R}^d}\Bigl|\int_{y:|x-y|>\varepsilon}\frac{\Omega\bigl(\frac{x-y}{|x-y|}\bigl)}{|x-y|^{m}}f(y)d\mu(y)\Bigl|^2d\mu(x)\leq C\|f\|^2_{L^2(\mu)}.
$$

Employing the $T(1)$-theorem for spaces of non-homogeneous type \cite{ntv}, provided that $\mu$ has $m$-growth one can reduce the study of operator boundedness to understanding whether the potential, defined for $x\notin \supp(\mu)$ by $$
T(\mu)(x) = \int_{\R^d}\frac{\Omega\bigl(\frac{x-y}{|x-y|}\bigl)}{|x-y|^{m}}d\mu(y),$$
belongs to $L^{\infty}(\R^d\backslash \supp(\mu)).$

Now let us state the main result of this paper, which concerns even SIOs of co-dimension $2$.

\begin{thm}\label{maintheo}
Let $d\geq 3$. There exists a $(d-2)$-purely unrectifiable set $E$ and a $(d-2)$-dimensional probability measure supported on $E$ such that the $(d-2)$-dimensional potential associated to $\Omega$ belongs to $L^{\infty}(\R^d\backslash\supp(\mu))$ if and only if
\begin{equation}\label{moment}\int_{\mathbb{S}^{d-1}}\xi_i\cdot\xi_j\cdot\Omega (\xi)\,d\mathscr{H}^{d-1}(\xi)= 0\text{ for all }i,j\in \{1,\dots, d\}.\end{equation}
Additionally, unless $\Omega$ vanishes identically, the SIO associated to $\Omega$ fails to exist in the sense of principal value $\mu$-almost everywhere.
\end{thm}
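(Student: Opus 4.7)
My plan is to construct $E$ and $\mu$ as the attractor of a symmetric iterated function system in $\R^d$, following the codimension-one constructions of \cite{JN,MP} adapted to codimension two. Specifically, $E$ will be the attractor of an IFS consisting of $N$ similitudes of a fixed contraction ratio $\rho$, with $N\rho^{d-2} = 1$, so that the natural self-similar measure $\mu$ is a $(d-2)$-dimensional probability measure with $(d-2)$-growth. The subcube positions will be chosen to be invariant under a sufficiently large symmetry group $G$ of $\R^d$ (for instance, the isometry group of a regular cross-polytope or simplex) so as to force, for every Cantor cube $Q$ with center $y_Q$,
\[
\int_Q (y - y_Q)\, d\mu(y) = 0, \qquad \int_Q (y - y_Q)(y - y_Q)^T\, d\mu(y) = c(Q)\,I.
\]
Pure unrectifiability of $E$ then follows from standard tangent-measure arguments exploiting the uniform separation of subcubes at every scale.

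Next, I would reinterpret \eqref{moment} via a spherical harmonic decomposition. Writing $\Omega = \sum_{k\text{ even}} Y_k$, and noting that the restrictions $\xi_i\xi_j|_{\mathbb{S}^{d-1}}$ span the direct sum of the constants and the degree-two spherical harmonics, the standing hypothesis $\int\Omega = 0$ together with \eqref{moment} is equivalent to $Y_0 = Y_2 = 0$; i.e., $\Omega$ consists only of components of degree $k \geq 4$. Each $Y_k$ extends to a solid harmonic polynomial $P_k$, yielding $K(x) = \sum_{k \geq 4,\,k\text{ even}} P_k(x)/|x|^{d-2+k}$.

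For the sufficiency direction, fix $x \notin E$ and decompose $T(\mu)(x)$ as a telescoping sum of scale-$j$ atomic approximations $\mu_j = \sum_{\ell(Q)=\rho^j}\mu(Q)\delta_{y_Q}$. The increment $T(\mu_{j+1}) - T(\mu_j)$ is analyzed parent-cube-wise by Taylor expanding $K(x - \cdot)$ at $y_Q$: the linear term vanishes by the first symmetry of $\mu$, and using the identity $\Delta[P_k(x)/|x|^{d-2+k}] = -k(d+k-2)P_k(x)/|x|^{d+k}$, the second-order term reduces to a kernel of the same angular type but with improved radial decay, weighted by $c(Q) \sim \rho^{2j}$. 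For $k \geq 4$, the residual angular oscillation of $P_k$ against the symmetric measure provides enough additional cancellation to make the resulting multi-scale series converge absolutely, giving $T(\mu)\in L^{\infty}$. Conversely, if $Y_2 \neq 0$, the corresponding increments retain a non-cancellable self-similar structure (the quadratic Taylor correction lives in the same spherical harmonic class as the original kernel) and, accumulating over the infinitely many scales by which $x$ can approach $E$, force a logarithmic divergence.

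Finally, for the principal-value statement, fix a typical $x \in E$ and consider truncations at $\varepsilon_j = \rho^j$. By self-similarity of $\mu$ and the homogeneity of $K$, the shell contributions $\int_{\varepsilon_{j+1} < |x-y| \leq \varepsilon_j} K(x-y)\,d\mu(y)$ are, up to rotations inherited from the IFS symmetry, independent of $j$. For a suitable choice of the symmetry group $G$, one can verify that unless $\Omega \equiv 0$ these rotated shell contributions cannot combine into a convergent series for $\mu$-a.e.\ $x$, giving principal-value non-existence. The main obstacle will be the sufficiency argument: isolating the correct higher-order cancellation for kernels of degree $\geq 4$ against the symmetric measure, and closing the multi-scale sum via self-similarity. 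The necessity and principal-value statements then follow from the same scale-by-scale bookkeeping, driven by the failure of cancellation when the low-order spherical harmonic content of $\Omega$ is retained.
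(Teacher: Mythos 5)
There is a genuine gap, and it sits exactly where your proposal defers the work: the sufficiency direction. Your symmetry hypotheses on the self-similar measure (vanishing first moments, isotropic second moments of each cube) only kill the degree-$\le 2$ part of a \emph{local} Taylor expansion of $K(x-\cdot)$ about the parent's center, and the remaining term is asserted to be summable because ``the residual angular oscillation of $P_k$ against the symmetric measure provides enough additional cancellation.'' Nothing in the proposal substantiates this, and it is precisely the hard point: with $N\rho^{d-2}=1$ the children of the cube nearest to $x$ contribute an increment of size $O(1)$ at \emph{every} scale $\rho^j\gtrsim \dist(x,E)$ (the children centers are spread over the whole parent, so $|y_{Q'}-y_Q|\sim |x-y_{Q'}|$ and the quadratic Taylor bound is not even valid for those children), so absolute convergence of the telescoping series cannot come from the moment symmetries alone; you would need an exact cancellation of the \emph{full} kernel against the local structure of $\mu$, which is the original problem again. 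This is why the paper does not use an AD-regular IFS at all: its measure is, at each generation, exactly normalized Lebesgue measure on a family of balls, and the whole proof hinges on the Reflectionless Property (Lemma \ref{reflectionless}), the identity $\int_{B(x_0,r)}K(x-y)\,dm_d(y)=0$ for all $x\in B(x_0,r)$, which is shown to be \emph{equivalent} to \eqref{moment} through the chord-length formula $r(x,\zeta)^2+r(x,-\zeta)^2=4(x\cdot\zeta)^2-2(|x|^2-1)$ — an exact cancellation special to the homogeneity $-(d-2)$ that kills the entire kernel (not just its low spherical-harmonic modes) at every scale, leaving only errors $\delta_k^{2\alpha/d}$ controlled by the comparison Lemma \ref{kernelestimates} and summable by \eqref{deltasum}. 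Indeed, all known constructions of this type (Huovinen, Jaye--Nazarov, Mateu--Prat, and this paper) use zero-lower-density sets for exactly this reason; for an Ahlfors-regular self-similar Cantor set of the kind you propose there is no evidence the potential is bounded even under \eqref{moment}, so the ``only if'' half of your equivalence is also at risk of being vacuously framed around the wrong set.

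Two further points are not minor. First, your pure unrectifiability claim does not follow from ``standard tangent-measure arguments exploiting separation'' for an AD-regular set of integer dimension $d-2$; proving it there requires genuine work (projection-type arguments), whereas the paper's set has $\liminf_{r\to0}\mathscr{H}^{d-2}(E\cap B(x,r))/r^{d-2}=0$ at every point, which yields unrectifiability immediately and is also what drives the final statement of Theorem \ref{maintheo}. Second, both your necessity argument (``logarithmic divergence'' when $Y_2\neq0$) and your principal-value argument (``rotated shell contributions cannot combine into a convergent series'') are heuristics: shell contributions could well vanish or telescope for symmetric configurations even with $\Omega\not\equiv0$. The paper instead proves a quantitative lower bound $|\int_{B(z,tr_n)\setminus B(z,sr_n)}K(\cdot-\zeta)\,d\mu(\zeta)|\ge c_0$ on a sub-ball $D_j^n$ of definite proportion (Lemma \ref{lowerboundforkernel}, via comparison of $\mu$ with normalized Lebesgue measure, Claim \ref{claimforkernel}), and then an iteration showing $\mu$-a.e.\ point lies in infinitely many $D_j^n$; the necessity direction similarly accumulates a fixed positive contribution $c_0$ per scale from the failure of \eqref{refless}. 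As written, your proposal identifies the correct spherical-harmonic reformulation of \eqref{moment} but does not supply the cancellation mechanism that makes the theorem true.
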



It is interesting to note that for the singular integral operator  associated to the measure $\mu$ we construct in Theorem \ref{maintheo}, the two properties of $L^2(\mu)$-boundedness and existence in the principle value are quite distinct.

The class of kernels that satisfy the hypothesis for our main result above is non-empty.  To construct an example, fix a non-negative function $\varphi\in C^1([0,1])$ with $\varphi(1)=0$. For $\xi = (\xi_1,\xi_2,\xi')\in \mathbb{R}^d$ first define
$$a(\xi_1,\xi_2,\xi') = (\xi_2^2-\xi_1^2)\varphi(\xi_1^2+|\xi'|^2)\varphi(\xi_2^2+|\xi'|^2).
$$
For any $\xi'\in \mathbb{R}^{d-2}$ with $|\xi'|\leq 1$, the integral of $a(\cdot,\cdot, \xi')$ over the quarter circle $\{(\xi_1,\xi_2):\; \xi_1\geq0,\,\xi_2\geq 0,\, |\xi_1|^2+|\xi_2|^2=1-|\xi'|^2\}$ equals $0$, and $a$ vanishes if either $|\xi_1|^2$ or $|\xi_2|^2$ equals $1-|\xi'|^2$
For $\xi = (\xi_1,\xi_2,\xi')\in \mathbb{S}^{d-1}$ set
\begin{align*}
&\Omega(\xi_1,\xi_2,\xi') = a(\xi_1,\xi_2,\xi') \quad \textit{if} \quad \{\xi_1,\xi_2\geq 0 \} \cup \{\xi_1,\xi_2\leq 0 \}\\
&\Omega(\xi_1,\xi_2,\xi') = a(\xi_2,\xi_1,\xi') = -a(\xi_1,\xi_2,\xi')  \quad \textit{if} \quad \{\xi_1\geq 0 ,\xi_2<0\} \cup \{\xi_1\leq 0,\xi_2\geq 0 \}.
\end{align*}
The function $\Omega$ is a H\"{o}lder continuous mean-zero even function on the sphere that satisfies (\ref{moment}). 


\subsection{Acknowledgements}  The authors were supported by the NSF through grants DMS--2049477 and DMS--2103534.  This work was completed while the authors were in residence at ICERM during the semester program on Harmonic Analysis and Convexity.

\section{The reflectionless property}

We will denote by $m_d$ the $d$-dimensional Lebesgue measure, and we often denote the surface area measure on $\mathbb{S}^{d-1}$ by $\sigma$.  The next lemma is the key to our construction, and is based around the proof of Lemma 3 of \cite{MOV2}.

\begin{lem}[The Reflectionless Property]\label{reflectionless}
Let $x_0\in \mathbb{R}^d$, $r>0$. The condition (\ref{moment}) holds if and only if 
\begin{equation}\label{refless}
 \int_{B(x_0,r)}K(x-y) dm_d(y) =0  \text{ for any }x\in B(x_0,r).
\end{equation}
\end{lem}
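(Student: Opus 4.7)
The plan is to reduce the integral $\int_{B(x_0,r)}K(x-y)\,dm_d(y)$ to an integral over the unit sphere against a quadratic form in the displacement $v = x-x_0$, and then exploit the evenness of $\Omega$ together with its vanishing mean to isolate precisely the moments in (\ref{moment}).

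First I would translate coordinates. Fix $x\in B(x_0,r)$ and set $v = x-x_0$, so $|v|<r$. Changing variables $u = x-y$ rewrites
\[
I(x):=\int_{B(x_0,r)}K(x-y)\,dm_d(y) = \int_{B(v,r)}\frac{\Omega(u/|u|)}{|u|^{d-2}}\,dm_d(u),
\]
where $B(v,r)$ is a ball of radius $r$ centered at $v$ and containing the origin.

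Next I would pass to polar coordinates $u = \rho\xi$ with $\rho>0$ and $\xi\in\mathbb{S}^{d-1}$. The Jacobian $\rho^{d-1}$ cancels the $\rho^{-(d-2)}$ in the kernel, leaving one factor of $\rho$. For each $\xi$ the ray $\{\rho\xi:\rho>0\}$ enters $B(v,r)$ at $\rho=0$ (since the origin lies inside $B(v,r)$) and exits at
\[
b(\xi) = \langle\xi,v\rangle + \sqrt{\langle\xi,v\rangle^2 + r^2 - |v|^2},
\]
so $\int_0^{b(\xi)}\rho\,d\rho = \tfrac12 b(\xi)^2$, giving
\[
I(x) = \tfrac12\int_{\mathbb{S}^{d-1}}\Omega(\xi)\,b(\xi)^2\,d\sigma(\xi).
\]
Expanding the square yields three terms: $2\langle\xi,v\rangle^2$, the constant $r^2-|v|^2$, and the cross term $2\langle\xi,v\rangle\sqrt{\langle\xi,v\rangle^2+r^2-|v|^2}$. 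The cross term is odd in $\xi$, so it vanishes against the even function $\Omega$; the constant term integrates to zero because $\int_{\mathbb{S}^{d-1}}\Omega\,d\sigma=0$. Consequently
\[
I(x) = \int_{\mathbb{S}^{d-1}}\Omega(\xi)\langle\xi,v\rangle^2\,d\sigma(\xi) = \sum_{i,j=1}^{d} v_i v_j \int_{\mathbb{S}^{d-1}}\xi_i\xi_j\,\Omega(\xi)\,d\sigma(\xi).
\]

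Finally I would conclude both implications from this identity. If (\ref{moment}) holds, every coefficient vanishes and $I(x)\equiv 0$. Conversely, if $I(x)=0$ for every $x\in B(x_0,r)$, then the symmetric quadratic form $v\mapsto \sum_{i,j}v_iv_j M_{ij}$ vanishes on an open ball of $v$'s, hence the symmetric matrix $M_{ij}:=\int_{\mathbb{S}^{d-1}}\xi_i\xi_j\Omega\,d\sigma$ is zero, which is exactly (\ref{moment}). The only mildly subtle step is the parity accounting for the three pieces of $b(\xi)^2$; the rest is bookkeeping in polar coordinates.
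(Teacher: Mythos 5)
Your proposal is correct and follows essentially the same route as the paper: polar coordinates centered at $x$, reduction to a spherical integral of $\Omega(\xi)$ against the squared exit radius, and the even/mean-zero cancellations that leave only the quadratic moments $\int_{\mathbb{S}^{d-1}}\xi_i\xi_j\Omega\,d\sigma$. The only cosmetic difference is that you integrate directly from $0$ to the exit radius $b(\xi)$, whereas the paper first discards the (zero) integral over $B(x,1+|x|)$ and folds antipodal directions via the identity $r(x,\zeta)^2+r(x,-\zeta)^2=4(x\cdot\zeta)^2-2(|x|^2-1)$; both amount to the same parity argument, and your explicit remark that a quadratic form vanishing on an open ball forces the symmetric matrix to vanish cleanly settles the converse.
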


\begin{proof}
Without loss of generality we may assume that $x_0=0$ and $r=1$.  Employing the mean zero property of $\Omega$, first observe that
$$ \int_{B(0,1)}K(x-y) dm_d(y) = \int_{B(x,1+|x|)\backslash B(0,1)}K(x-y)dm_d(y).
$$
Express this integral in terms of the polar coordinates $y=x+r\zeta$ centered at $x$. Setting $r(x,\xi)$ to be the (smallest) solution of $|x+r(x,\xi)\cdot\xi|=1$, we get
\begin{align*}
 \int_{B(0,1)}K(x-y) dm_d(y)
 & = \int_{|\zeta|=1}\int_{r(x,\zeta)}^{1+|x|}{1\over r^{d-2}}\Omega(\zeta) r^{d-1} dr d\sigma(\xi)\\
&= {1\over 2}\int_{|\zeta|=1} (1+|x|)^2 \Omega(\zeta)  d\sigma(\zeta) - {1\over 2}\int_{|\zeta|=1} (r(x,\zeta))^2 \Omega(\zeta)  d\sigma(\zeta) \\
&=: I - II
 \end{align*}

The term $I$ vanishes as $\int_{\mathcal{S}^{d-1}}\Omega(x)d\sigma(x) =0$.

Set $U^+$ be the half of the unit sphere above the hyperplane $\{x_d = 0\}$. Since $\Omega$ is an even function,

\begin{equation*}
II = {1\over 2}\int_{|\zeta|=1} (r(x,\zeta))^2 \Omega(\zeta)  d\sigma(\zeta)  = {1\over 2}\int_{U^+} \bigl[r(x,\zeta)^2 + r(x,-\zeta)^2\bigl]  \Omega(\zeta)  d\sigma(\zeta) \end{equation*}

A simple computation yields $(r(x,\zeta)^2 + r(x,-\zeta)^2) = 4(x\cdot\zeta)^2 - 2(|x|^2-1)$, which leads to
\begin{align*}
II & = {1\over 2}\int_{U^+} (r(x,\zeta)^2 + r(x,-\zeta)^2)  \Omega(\zeta)  d\sigma(\zeta)   \\&=   {1\over 2}\int_{U^+}  4(x\cdot \zeta)^2 \Omega(\zeta)  d\sigma(\zeta)  - {1\over 2}\int_{U^+} 2(|x|^2-1) \Omega(\zeta)  d\sigma(\zeta) =: III + IV.
\end{align*}

Since $\Omega$ has mean-zero over $U_+$, the term $IV$ vanishes. 

We conclude that (\ref{refless}) holds if and only if the term $III$ vanishes for every $x\in B(0,1)$, which is in turn equivalent to the condition (\ref{moment}).  This completes the proof of the lemma.
\end{proof}

\section{ Construction of the zero lower density set and the associated measure}

The construction our zero lower density set follows along the same lines as the papers \cite{JN} and \cite{MP}.

\subsection{The set $E$}  Set $\kappa_d = {\pi^{d\over 2}\over \Gamma({d\over 2}+1)}$ to be the volume of the $d$-dimensional unit ball.  

\begin{lem}\label{packingballs}
One can pack $(\frac{R}{r})^{d-2}$ pairwise essentially disjoint cubes of side length $\sqrt[d]{\kappa_d r^{d-2}R^2}$ into a ball of radius $R\bigg(1+\sqrt{d}\sqrt[d]{\kappa_d}\sqrt[d]{r^{d-2}\over R^{d-2}}\bigg)$.
\end{lem}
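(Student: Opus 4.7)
\medskip

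\noindent \textbf{Proof Plan.} The heuristic driving the statement is a volume count: the cube side length $s := \sqrt[d]{\kappa_d r^{d-2} R^2}$ satisfies $s^d = \kappa_d r^{d-2} R^2$, and hence $(R/r)^{d-2}$ cubes of this side length have total volume $\kappa_d R^d$, the volume of a ball of radius $R$. So it is natural to try a grid packing argument and just check that the enlarged ball radius agrees with the formula in the statement.

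\medskip

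\noindent Concretely, the plan is as follows. First, let $s = \sqrt[d]{\kappa_d r^{d-2} R^2}$ and consider the standard tiling of $\mathbb{R}^d$ by (essentially disjoint) closed cubes of side length $s$ whose vertices lie on $s\mathbb{Z}^d$. Let $\mathcal{F}$ be the subfamily of cubes in this tiling that intersect $B(0,R)$. The members of $\mathcal{F}$ are pairwise essentially disjoint by construction.

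\medskip

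\noindent Next, I would verify two properties of $\mathcal{F}$. For the cardinality bound, since $B(0,R) \subset \bigcup_{Q \in \mathcal{F}} Q$, comparing volumes yields
\[
\#\mathcal{F} \;\geq\; \frac{m_d(B(0,R))}{s^d} \;=\; \frac{\kappa_d R^d}{\kappa_d r^{d-2} R^2} \;=\; \Bigl(\frac{R}{r}\Bigr)^{d-2}.
\]
For the containment, each $Q\in \mathcal{F}$ has diameter $s\sqrt{d}$ and meets $B(0,R)$, hence lies in $B(0, R + s\sqrt{d})$. A direct algebraic simplification of $R + s\sqrt{d}$, using $s = \sqrt[d]{\kappa_d}\,\sqrt[d]{r^{d-2}R^2}$, gives
\[
R + \sqrt{d}\sqrt[d]{\kappa_d r^{d-2} R^2} \;=\; R\Bigl(1 + \sqrt{d}\,\sqrt[d]{\kappa_d}\,\sqrt[d]{r^{d-2}/R^{d-2}}\Bigr),
\]
which is precisely the radius appearing in the statement of the lemma.

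\medskip

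\noindent There is no real obstacle here; the only subtle point is the choice of $s$, which is forced by matching the total cube volume to that of $B(0,R)$. If one prefers $\#\mathcal{F}$ to be exactly $\lfloor (R/r)^{d-2}\rfloor$ (rather than $\geq (R/r)^{d-2}$), one simply discards the excess cubes.
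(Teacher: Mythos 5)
Your proposal is correct and follows essentially the same argument as the paper: a grid of cubes of side $\sqrt[d]{\kappa_d r^{d-2}R^2}$, a volume-counting bound on how many grid cubes meet $B(0,R)$, and the diameter bound to place those cubes in the enlarged ball. The only (harmless) difference is that you spell out the diameter estimate and the algebraic simplification of the radius, which the paper leaves implicit.
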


\begin{proof}
Without loss of generality we can assume that our ball is centered at the origin. Now we will consider a cubic grid of mesh size $\sqrt[d]{\kappa_d r^{d-2}R^2}$. Suppose now that the cubes $Q_1,Q_2,....,Q_M$ intersect $B(0,R)$. These cubes are contained in the ball centred at $0$ with radius $R\bigl(1+\sqrt{d}\sqrt[d]{\kappa_d}\sqrt[d]{r^{d-2}\over R^{d-2}}\bigl)$.  Finally, since
\begin{equation*}
    M\kappa_d r^{d-2}R^2= \sum_{j=1}^{M}m_d(Q_j)>m_d(B(0,R)) = \kappa_d R^d,
\end{equation*}
 we have $M>{R^{d-2}\over r^{d-2}}$, and the lemma follows.
\end{proof}


 To begin let us consider a sequence $\{r_k\}_{k\geq 0}$ that tends to zero quickly and such that $r_0=1$ and $r_{k+1}< {r_k \over B}$ for an absolute constant $B$ which will be chosen later. Additionally shall assume that ${r_k \over r_{k+1}}\in \mathbb{N}$ and ${1\over r_k}\in \mathbb{N}$.

Set $\widetilde{B}_{1}^{0} = B(0,1)$. We will construct the set iteratively. Given the $k$-th generation of ${1\over r_k^{d-2}}$ balls $\Tilde{B}_{j}^{k}$ of radius $r_k$, we proceed to the $(k+1)$-st generation as follows: for each ball $\widetilde{B}_{j}^{k}$ we apply Lemma \ref{packingballs} with $R= r_k$ and $r= r_{k+1}$, so we find $({r_k\over r_{k+1}})^{d-2}$ of pairwise disjoint cubes $Q_{l}^{k+1}$ balls of sidelength $\sqrt[d]{\kappa_d r_{k+1}^{d-2}r_k^2}$ contained in ball $(1+A\sqrt[d]{r^{d-2}\over R^{d-2}})\widetilde{B}_{j}^{k}$ where $A=\sqrt{d}\sqrt[d]{\kappa_d}$.

Set $\widetilde{B}_{j}^{k+1}= B(z_{l}^{k+1}, r_{k+1})$ where $z_{l}^{k+1}$ denotes the center of the cube $Q_{l}^{k+1}$. We carry out this process for each ball $\widetilde{B}_{j}^{k}$ from the $k$-th generation. In total, we get ${1\over r_{k+1}^{d-2}}$ balls $\widetilde{B}_{j}^{k+1}$ in the $(k+1)$-st level.

Set $\delta_{k+1} = A\sqrt[d]{r_{k+1}^{d-2}\over r_{k}^{d-2}}$ and
\begin{equation*}
  B_{j}^{k}  = (1+\delta_{k+1})\widetilde{B}_{j}^{k} \quad \textit{and} \quad E^{k} = \bigcup_{j\geq 1}B_{j}^{k}.
\end{equation*}

We will frequently make use of the following properties of the construction

\begin{itemize}
    \item[(i)] For each $k\geq 1$,  $$\bigcup_{l}Q_{l}^{k+1} \subset E^{k}.$$
    \item[(ii)] For each $k\geq 1$, we have $B_{j}^{k} \subset Q_{j}^{k}$, and moreover (provided that $B$ is chosen appropriately)
     $$\text{dist}(B_{j}^{k},\partial Q_{j}^{k}) \geq \frac{1}{4}\sqrt[d]{\kappa_d r_{k}^{d-2}r_{k-1}^2}.$$
    
    
    \item[(iii)] For each $k\geq 1$ and for $i\neq j$, $\text{dist}(B_{j}^{k}, B_{i}^{k}) \geq \frac{1}{4}{\sqrt[d]{\kappa_dr_{k}^{d-2}r_{k-1}^2}}.$
\end{itemize}
Observe that for each $k\geq 0$ we have $E^{k+1}\subset E^{k}$ and now set $E= \bigcap_{k\geq 0}E^k$.  It is not hard to check that the set $E$ satisfies $0<\mathscr{H}^{d-2}(E)<\infty$, and $\liminf_{r\to 0}\frac{\mathscr{H}^{d-2}(E\cap B(x,r))}{r^{d-2}}=0$ for every $x\in E$.  Consequently the set $E$ is $(d-2)$-purely unrectifiable \cite{Mat}.

It will be convenient to use the following notation: Each $x\in E^k$ is contained in a some unique ball $B_j^k$ and in a unique cube $Q_j^k$, we will denote these by $B^k(x)$ and $Q^k(x)$ respectively.

\subsection{The measure $\mu$}  Set 

\begin{equation*}
   \mu_{j}^{k} = {1\over r_k^2}\chi_{\Tilde{B}_{j}^{k}}m_d \quad \textit{and} \quad \mu^k = \sum_{j}\mu_{j}^{k}.
\end{equation*}

Observe that $\supp(\mu^k) \subset E^k$ and $\mu^k(\mathbb{R}^d) = 1$ for all $k$.  The following properties hold for the measures $\mu^{k}$:

\begin{itemize}
    \item [(a)] $\supp(\mu^k) \subset  \bigcup_{j\geq 1}B_{j}^{m}$ if $k\geq m$.
    \item[(b)] $\mu^k(B_{j}^{m}) = r_m^{d-2}$ for $k\geq m$.
    \item[(c)] There exists a constant $C>0$ such that for any $k$ and ball $B(x,r)$, $$\mu^k(B(x,r))\leq Cr^{d-2}.$$
\end{itemize}

Properties (a) and (b) follow immediately from construction.  For (c), first note that for $r\geq 1$, this property is clear as $\mu^k$ is a probability measure. If $0<r<1$, then $r\in (r_{m+1},r_{m})$ for some $m \in \mathbb{N}$. In the case when $m\geq k$, $r_m \leq r_k$, and hence the ball $B(z,r)$ intersects with at most one $B_j^k$. Therefore

\begin{equation*}
 \mu^k(B(z,r)) = {1\over r_k^2}m_d(  B(z,r) \cap \widetilde{B}_j^k) \leq {r^d\over r_k^2} \leq r^{d-2}.
\end{equation*}

On the other hand, if $k\geq m$, then property (iii) of the construction ensures that $B(z,r)$ intersects at most $1+ C{r^d\over r_{m+1}^{d-2}r_{m}^2}$ balls $B_j^{m+1}$. Property $(b)$, then ensures that
\begin{equation*}
 \mu^k(B(z,r)) = \sum_{j} \mu^k(B(z,r)\cap B_{j}^{m+1}) \leq \bigg(1+ C{r^d\over r_{m+1}^{d-2}r_{m}^2}\bigg)r_{m+1}^{d-2} \leq Cr^{d-2},
\end{equation*}
and (c) is proved.

Finally, passing to a subsequence if necessary, the measures $\mu^k$ converge weakly to a $(d-2)$-dimensional measure $\mu$ supported on $E$.

\section{The boundedness of the potential associated to $\Omega$}

Fix $\alpha\in (0,1]$ to be the H\"{o}lder exponent of $\Omega$.  All absolute constants in this section may depend on dimension, and the quantity $$\|\Omega\|_{C^{\alpha}(\mathbb{S}^{d-1})}: = \sup_{\omega,\xi\in \mathbb{S}^{d-1}}\frac{|\Omega(\omega)-\Omega(\xi)|}{|\omega-\xi|^{\alpha}}$$ without further mention.  We will write $P\lesssim Q$ to mean that $P\leq CQ$ for an absolute constant $C>0$.

 We shall henceforth assume that $E$ is constructed so that \begin{equation}\label{deltasum}\sum_{k\geq 1}\delta_k^{2\alpha/d} < \infty.\end{equation}

Our first goal will be to show that the property (\ref{moment}) will ensure that $\|T_{\mu}(1)\|_{L^{\infty}(\mathbb{R}^d\setminus \supp{\mu})}<\infty$. This will in turn follow from the weak convergence of $\mu^k$ to $\mu$ and the following proposition. 

\begin{prop}\label{mainprop}
There is a constant $C>0$ such that the following holds: Provided $\dist(x,\supp{(\mu)}) = \epsilon>0$, then for any $m\in \mathbb{N}$ with $r_m < {\epsilon \over 2d}$,
\begin{equation*}
    \bigg|\int K(x-\zeta)d\mu^m(\zeta)\bigg|\leq C.
\end{equation*}
\end{prop}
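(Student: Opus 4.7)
The plan is to telescope across the scales $k = 0, 1, \ldots, m$ and bound each piece using the Reflectionless Property (Lemma \ref{reflectionless}) and the H\"older smoothness of $K$. Writing
$$\int K(x-y)\, d\mu^m(y) \;=\; \int K(x-y)\, d\mu^0(y) \;+\; \sum_{k=0}^{m-1}\int K(x-y)\, d(\mu^{k+1}-\mu^k)(y),$$
the base term equals $\int_{B(0,1)} K(x-y)\, dm_d(y)$, which vanishes if $x \in B(0,1)$ by Lemma \ref{reflectionless} and is otherwise bounded uniformly using the local integrability of $|x-y|^{-(d-2)}$ over $B(0,1)$.

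For each scale $k$, I would decompose $\mu^{k+1}-\mu^k = \sum_i \nu_i^k$ where $\nu_i^k := \mu^{k+1}|_{B_i^k} - \mu_i^k$ is supported in the parent ball $B_i^k$ and has zero total integral (both pieces carry mass $\kappa_d r_k^{d-2}$). By the separation property (iii), at most one parent $B_{i^*}^k$ contains $x$; for all other ``far'' parents, the mean-zero property of $\nu_i^k$ combined with the H\"older smoothness of $K$ provides cancellation of order $r_k^{d-2+\alpha}/\dist(x, c_i^k)^{d-2+\alpha}$. For the exceptional near parent, when $x \in \widetilde{B}_{i^*}^k$ the reflectionless property forces $\int K\,d\mu_{i^*}^k = 0$, so that only the residual $\int K\,d\mu^{k+1}|_{B_{i^*}^k}$ remains; this is handled by recursion into scale $k+1$. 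Since $r_m < \epsilon/(2d) \le \dist(x, \supp\mu)$, the recursion must terminate at some level $k_0 < m$ beyond which $x$ has exited every parent ball of the construction.

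The main obstacle is that the naive far-ball estimate summed via the $(d-2)$-growth of $\mu^k$ (property (c)) yields only an $O(1)$ per-scale bound, which does not sum over $k$. To overcome this, I would further decompose each $\nu_i^k$ into a thin-shell piece supported near $\partial \widetilde{B}_i^k$ (thickness $\sim \delta_{k+1}\, r_k$, arising from the geometric mismatch between the packing cubes $Q_j^{k+1} \subset B_i^k$ and the smaller ball $\widetilde{B}_i^k$) and a cube piece comparing each child measure $\mu_j^{k+1}$ to the uniform Lebesgue measure of matching mass on its enclosing cube $Q_j^{k+1}$. The shell piece has total variation $\lesssim \delta_{k+1}\, r_k^{d-2}$, producing a factor of $\delta_{k+1}$; the cube piece exploits the spherical symmetry of $\mu_j^{k+1}$ inside $Q_j^{k+1}$ together with H\"older cancellation at the cube scale. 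Combining these with the growth estimate should produce a per-scale bound of order $\delta_{k+1}^{2\alpha/d}$, which is summable by the hypothesis (\ref{deltasum}). I also anticipate needing to handle the case $x \in B_{i^*}^k \setminus \widetilde{B}_{i^*}^k$ (when $x$ sits in the shell of the near parent, where the reflectionless identity is unavailable) via a boundary continuity argument, which should again cost only a factor of $\delta_{k+1}$-type size.
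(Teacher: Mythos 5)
Your outline is correct in substance and rests on the same two pillars as the paper's argument --- the reflectionless identity of Lemma \ref{reflectionless} for the ball (nearly) containing $x$, and matched-mass H\"{o}lder comparisons between pieces of the measure and uniform measure on the packing cubes, with the geometric mismatch (shell/symmetric difference) costing a power of $\delta_k$ and everything summed via \eqref{deltasum} --- but you organize it differently: you telescope $\mu^{k+1}-\mu^k$ over \emph{every} parent at \emph{every} generation, whereas the paper exploits the mass-consistency property (b) ($\mu^m(B^k_j)=\mu^k(B^k_j)$) to compare $\mu^m$ in one step with $\chi_{Q_j^k}m_d/r_{k-1}^2$ inside each annulus $B^{k-1}(x^*)\setminus B^k(x^*)$ around the nearest support point $x^*$, peeling only down to the scale $n$ with $r_n\le\epsilon$ and bounding the remaining core crudely (the term $A_1$); this one-step comparison is exactly your ``cube piece'' (Lemma \ref{kernelestimates}) and avoids any recursion bookkeeping. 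Two remarks on your sketch. First, the ``main obstacle'' you identify is not actually there: by the separation property (iii), the nearest far parent at scale $k$ lies at distance $\gtrsim\sqrt[d]{r_k^{d-2}r_{k-1}^2}\gg r_k$ from $x$, so the plain mean-zero--plus--H\"{o}lder estimate summed with the $(d-2)$-growth already gives $\lesssim (r_k/r_{k-1})^{2\alpha/d}$ per scale, which is summable; your finer shell/cube splitting is still the right mechanism at the smallest scales, but no ``spherical symmetry'' of $\mu_j^{k+1}$ is used --- only equal masses and H\"{o}lder continuity of $K$. Second, the real work hides in the cases your sketch waves at: when $x$ lies just outside the near parent (or at the transition scale $r_n\le\epsilon<r_{n-1}$), neither the interior reflectionless identity nor the far-field H\"{o}lder bound applies directly, and one needs the dilated-ball version of the reflectionless argument (the paper's $A_6$, producing the extra $(\delta_k+\epsilon/r_{k-1})^{2/d}$ term) together with a crude bound, via \eqref{kernelons} and \eqref{eqfornotj} and the observation $\epsilon\gtrsim\sqrt[d]{r_k^{d-2}r_{k-1}^2}$, for the boundedly many cubes too close to $x$; your ``boundary continuity argument'' is the right idea, and the terminal residual of your recursion must likewise be bounded directly using $\dist(x,\supp\mu^m)\ge\epsilon/2$ and the growth property, as in the paper's estimate of $A_1$.
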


\begin{proof}To begin the proof of Proposition \ref{mainprop}, fix $x^{*} \in \supp{(\mu)}$ with $\dist{(x,x^*)} = \epsilon$. Select $m$ satisfying $r_m < {\epsilon \over 2d}$, and let $n$ be the least integer such that $r_n \leq \epsilon$ (hence $m\geq n)$. Observe that

\begin{align*}
   \int K(x-\zeta)d\mu^m(\zeta) & = \int_{B^n(x^*)} K(x-\zeta)d\mu^m(\zeta) + \sum_{k=1}^{n}\int_{B^{k-1}(x^*)\setminus B^{k}(x^*)} K(x-\zeta)d\mu^m(\zeta)\\
   &= A_1 + A_2.
\end{align*}

To estimate $A_1$ observe that every $\zeta \in \supp(\mu^m)$ is contained in a ball $B_j^m$ of radius $(1+\delta_{m+1})r_m$.  Therefore

\begin{equation}\label{absolutevalue}
    \dist(x, \supp{(\mu^m)}) \geq \epsilon - (1+\delta_{m+1})r_m \geq {\epsilon \over 2}.
\end{equation}

Consequently, using property $(b)$ of the measure $\mu^m$, 

\begin{equation*}
    |A_1| \leq  \int_{B^n(x^*)} |K(x-\zeta)|d\mu^m(\zeta) \lesssim \frac{\mu^m(B^n(x^*))}{\epsilon^{d-2}} \lesssim \frac{r_n^{d-2}}{\epsilon^{d-2}} \lesssim 1.
\end{equation*}

To estimate the term $A_2$ we make the following claim: For some constant $C>0$ such that for any $k \in \{1,2...,n\},$

\begin{equation}\label{ineqsecondterm}
  \bigg|\int_{B^{k-1}(x^*)\setminus B^{k}(x^*)} K(x-\zeta)d\mu^m(\zeta) \bigg| \lesssim \delta_k^{\alpha 2/d} + \sqrt[d]{\frac{\epsilon^2}{r_{k-1}^2}}.
\end{equation}

Employing \eqref{ineqsecondterm} yields (recalling the assumption (\ref{deltasum}))

\begin{equation*}
    |A_2| \lesssim 
 \sum_{k=1}^{n}\Bigl(\delta_k^{2\alpha/d}+\sqrt[d]{\epsilon^2 \over r_{k-1}^2}\Bigl) \lesssim 1,
\end{equation*}
and Proposition \ref{mainprop} follows.  Therefore our goal will be to prove our claim \eqref{ineqsecondterm}.  To do so we will appeal to the following comparison lemma.

\begin{lem}\label{kernelestimates}
Let $x_0 \in \mathbb{R}^d$. Fix $r,R \in (0,1]$ with $r$  smaller than $R.$ Let $Q\in \mathbb{R}^d$ be some cube centered at $x_0$ with the sidelength $l(Q) = \sqrt[d]{\kappa_d r^{d-2}R^2}$ and let $B = B(x_0,2r).$
Suppose that $\nu_1, \nu_2$ are Borel measures with $\supp{(\nu_1)}\subset Q$,
$\supp{(\nu_2)} \subset B$ and $\nu_1(\mathbb{R}^d) = \nu_2(\mathbb{R}^d).$ Then for any $x\in \mathbb{R}^d$ with $\dist{(x, Q)} \geq {\sqrt[d]{\kappa_d r^{d-2}R^2} \over 8},$ we have

\begin{equation}\label{kernelsmoothness}
    \bigg|\int_{Q} K(x-\zeta)d\nu_1(\zeta) - \int_{B} K(x-\zeta)d\nu_2(\zeta)\bigg| \lesssim  
   \int_{Q}{(r^{d-2}R^2)^{\alpha/d}d\nu_1(\zeta)\over |x-\zeta|^{d-2+\alpha}} + \int_{B} {r^{\alpha}d\nu_2(\zeta)\over |x-\zeta|^{d-2+\alpha}}.
\end{equation}

\end{lem}

\begin{proof} We can set $x_0=0$, without loss of generality. For any $\zeta \in Q$ and for any $x$ such that $\dist{(x,Q)} \geq {\sqrt[d]{\kappa_d r^{d-2}R^2} \over 8}$, we have $|x|\approx |x-t\xi|$ for any $t\in [0,1]$, and so we have the standard kernel estimate

\begin{equation}\label{kernelsize}
 |K(x-\zeta) - K(x)| \lesssim  {|\zeta|^{\alpha}\over |x-\zeta|^{d-2+\alpha} },
\end{equation}
Therefore
\begin{align*}
&\bigg|\int_{Q} K(x-\zeta)d\nu_1(\zeta)  - \int_{B} K(x-\zeta)d\nu_2(\zeta)\bigg| \\
&\leq  \Bigl|\int_{Q} [K(x-\zeta) - K(x)]d\nu_1(\xi)\Bigl| +\Bigl|\int_{B} [K(x-\zeta)-K(x)]d\nu_2(\zeta)\Bigl| \\
&\lesssim  \int_{Q}{|\zeta|^{\alpha} \over |x-\zeta|^{d-2+\alpha}}d\nu_1(\zeta) + \int_{B}{|\zeta|^{\alpha} \over |x-\zeta|^{d-2+\alpha}}d\nu_2(\zeta).
\end{align*}
Observing that $|\zeta|\lesssim \sqrt[d]{r^{d-2}R^2}$ for $\xi\in Q$, while $|\zeta|\lesssim r$ for $\xi\in B$, completes the proof of lemma.
\end{proof}

Now we will proceed to prove the claim \eqref{ineqsecondterm}. Denote by $\mathcal{S}$ the collection
\begin{equation*}
  \mathcal{S} = \{j: B_j^k \neq B^k(x^*)\text{ and }B_j^k \subset B^{k-1}(x^*)\}.
\end{equation*}

First consider $j\in \mathcal{S}$ satisfying $\dist{(x, Q_j^k)} \geq {\sqrt[d]{\kappa_d r_{k}^{d-2}r_{k-1}^2} \over 8}$. 
 In this case we apply Lemma \ref{kernelestimates} with $\nu_1 = \chi_{Q_{j}^{k}}{m_d \over r_{k-1}^2} $ and $\nu_2 = \chi_{\widetilde{B}_{j}^{k}}\mu^m$, $R= r_{k-1}$, $r= r_k$ and $x_0 = x_{Q_j^k}$.  This gives

\begin{equation}\label{eqa2}
    \bigg|\int_{Q_j^k} K(x-\zeta){dm_d(\zeta) \over r_{k-1}^2} - \int_{B_j^k} K(x-\zeta)d\mu^m(\zeta)\bigg| \lesssim \frac{(r_k^{d-2}r_{k-1}^2)^{\alpha/d}}{r_{k-1}^2}
    \int_{Q_j^k}{dm_d(\zeta)\over |x-\zeta|^{d-2+\alpha}} + \int_{B_j^k} {r_k^{\alpha}d\mu^m(\zeta)\over |x-\zeta|^{d-2+\alpha}},
\end{equation}

There can be at most $C$ indices $j\in \mathcal{S}$ satisfying  $\dist{(x, Q_j^k)} \leq {\sqrt[d]{\kappa_d r_k^{d-2}r_{k-1}^2} \over 8}$.  For such a $j$ we employ the elementary fact that for any set $S$ with finite measure
\begin{equation}\label{kernelons}
    \int_{S}|K(\zeta)|dm_d(\zeta)\lesssim \int_S\frac{1}{|\zeta|^{d-2}}dm_d(\zeta) \lesssim \sqrt[d]{(m_d(S))^2}.
\end{equation}
Combined with (\ref{absolutevalue}) this results in
\begin{equation}\label{eqfornotj}
\bigg|\int_{Q_j^k} K(x-\zeta){dm_d(\zeta) \over r_{k-1}^2} - \int_{B_j^k} K(x-\zeta)d\mu^m(\zeta)\bigg| \lesssim \frac{\sqrt[d]{(m_d(Q_j^k))^2}}{r_{k-1}^2} + \frac{\mu^m(B_j^k)}{\epsilon^{d-2}}.
\end{equation}
But now notice that
\begin{equation*}
{\sqrt[d]{\kappa_d r_k^{d-2}r_{k-1}^2} \over 8} \geq \dist{(x, Q_j^k)} \geq  \dist{(x^*, Q_j^k)} - d(x,x^*) \geq {\sqrt[d]{\kappa_d r_k^{d-2}r_{k-1}^2} \over 4} -\epsilon,
\end{equation*}
 and hence we have $\epsilon \gtrsim \sqrt[d]{r_k^{d-2}r_{k-1}^2}$.  Therefore we can bound
$$\frac{\sqrt[d]{(m_d(Q_j^k))^2}}{r_{k-1}^2} + \frac{\mu^m(B_j^k)}{\epsilon^{d-2}}\lesssim \Bigl(\frac{r_k}{r_{k-1}}\Bigl)^{2(d-2)/d}\lesssim \delta_k^2.
$$
Altogether, we can therefore estimate
$$A_3 = \Bigl|\int_{\bigcup_{j\in \mathcal{S}}Q_j^k}K(x-\zeta)\frac{dm_d(\zeta)}{r_{k-1}^2} - \int_{\bigcup_{j\in \mathcal{S}}Q_j^k}K(x-\zeta)d\mu^m(\zeta)\Bigl|,$$
by a sum of two terms:  The contribution to $A_3$ from $j$ which satisfy $\dist{(x, Q_j^k)} \geq {\sqrt[d]{\kappa_d r_{k}^{d-2}r_{k-1}^2} \over 8}$ is at most a constant multiple of
$$\frac{(r_k^{d-2}r_{k-1}^2)^{\alpha/d}}{r_{k-1}^2}
    \int_{B(x, 2r_{k-1}\backslash B(x,{\sqrt[d]{\kappa_d r_k^{d-2}r_{k-1}^2} \over 8})}{dm_d(\zeta)\over |x-\zeta|^{d-2+\alpha}} + \int_{\mathbb{R}^d\backslash B(x,{\sqrt[d]{\kappa_d r_k^{d-2}r_{k-1}^2} \over 8})} {r_k^{\alpha}d\mu^m(\zeta)\over |x-\zeta|^{d-2+\alpha}},$$
which is in turn bounded by a constant multiple of $\bigl(\frac{r_k}{r_{k-1}}\bigl)^{\alpha(d-2)/d}+\bigl(\frac{r_k}{r_{k-1}}\bigl)^{2\alpha/d}\lesssim \delta_k^{2\alpha/d}$.  The contribution from the remaining $j$ is at most a constant multiple of $\delta_k^2$, and so we arrive at
$$A_3 \lesssim \delta_k^{2\alpha/d}.$$

Now write
\begin{equation*}
    \bigg|\int_{B^{k-1}(x^*)\setminus B^{k}(x^*)} K(x-\zeta)d\mu^m(\zeta) \bigg| \leq A_3 + A_4,
\end{equation*}
where
\begin{equation*}
  A_4 = \bigg|\int_{\bigcup_{j\in \mathcal{S}}Q_j^k}K(x-\zeta){dm_d(\zeta)\over r_{k-1}^2}\bigg|.
\end{equation*}
Observe that 
\begin{align*}
   & A_4 \leq \bigg|\int_{\cup_{j\in \mathcal{S}}Q_j^k}K(x-\zeta){dm_d(\zeta)\over r_{k-1}^2}- \int_{B^{k-1}(x^*)}K(x-\zeta){dm_d(\zeta)\over r_{k-1}^2}\bigg| + \bigg| \int_{B^{k-1}(x^*)}K(x-\zeta){dm_d(\zeta)\over r_{k-1}^2}\bigg|\\
   & := A_{5}+A_{6}.
\end{align*}

To estimate the term $A_{5}$ we use the equation \eqref{kernelons}:

\begin{equation}\label{measureofdifference}
    A_{5} \lesssim \frac{1}{r_{k-1}^2}{m_d(\cup_{j\in \mathcal{S}}Q_j^k \triangle B^{k-1}(x^*))}^{2\over d} \lesssim\sqrt[d]{\delta_k^2}. 
\end{equation}

Finally, to estimate the term $A_{6}$, notice that  $x\in (1+{\epsilon\over r_{k-1}^2})B^{k-1}(x^*)$. Using the reflectionless property in Lemma \ref{reflectionless} and the inequality \eqref{kernelons}, we get

\begin{align*}
    A_{6} & = \bigg|\int_{(1+{\epsilon\over r_{k-1}}){B}^{k-1}(x^*)\setminus\tilde{B}^{k-1}(x^*)}K(x-\zeta){dm_d(\zeta)\over r_{k-1}^2}\bigg| \lesssim {1\over r_{k-1}^2} m_d\bigg((1+{\epsilon\over r_{k-1}^2}){B}^{k-1}(x^*)\setminus\tilde{B}^{k-1}(x^*)\bigg)^{2\over d}\\&\lesssim\sqrt[d]{\Bigl(\delta_k + {\epsilon \over r_{k-1}^2}\Bigl)^2}.
\end{align*}

This gives the desired claim \eqref{ineqsecondterm} hence finishes the proof for the proposition.

\end{proof}

\subsection{The potential is unbounded when the condition (\ref{moment}) fails}

Now we will show that the potential associated to $\Omega$ is unbounded if the reflectionless property \eqref{refless} (or equivalently the condition (\ref{moment})) fails. So suppose that there exists some $x_0 \in B(0,1)$ such that 
$\int_{B(0,1)}K(x_0-y) dm_d(y) >0.$  Hence, there exists some $r_0 > 0$ such that

\begin{equation*}
\int_{B(0,1)}K(x-y) dm_d(y) >c_0 > 0, \quad \text{for every } x \in B(x_0, r_0) \subset B(0,1).
\end{equation*}

Recalling the notation of our set, put $G_j^n = x_j^n + r_n B(x_0,r_0)$ for all $n$. So we have $$\int_{B_j^n}K(x-y) {dm_d(y)\over r_n^2} > c_0 >0, \text{ for all } x \in G_j^n.$$

Fix $N\in \mathbb{N}$, fix some $x\in E_{n+1}\cap \bigcup_jG_j^n$. A straightforward modification of the analysis of the previous section leads to

\begin{equation*}
 \int_{B^n(x)\setminus B^{n+1}(x)}K(x-y) d\mu(y)  > c_0 - C\delta_n^{2\alpha\over d},
\end{equation*}
and so, if $x\in \cap_{n=1}^N \bigcup_j G^j_n$, then by summing over all $n = 1,\dots, N$ we get
\begin{equation*}
 \int_{\R^d\setminus B^{n+1}(x)}K(x-y) d\mu(y)  > N c_0 - C\sum_{n=1}^N\delta_n^{2\alpha\over d}.
\end{equation*}
Consequently, the potential associated to $\Omega$ does not belong to in $L^{\infty}(\R^d\backslash \supp(\mu)).$

\section{On the non-existence of the principal value integral}


The result of this section will be rather general and be valid for all odd and even kernels (it will also not depend on the particular co-dimension, but we only state the result for co-dimension $2$).

\begin{thm}
\label{noprin}
Suppose that $\Omega:\mathbb{S}^{d-1}\to \R$ is $\alpha$-H\"{o}lder continuous and does not vanish identically on the sphere.  Then provided the sequence $\frac{r_{n+1}}{r_n}$ converges to zero sufficiently quickly, the potential associated to $\Omega$ fails to exist in the sense of principal value, meaning that
$$\lim_{\epsilon \to 0}\int_{\R^d\backslash B(x,\eps)}\frac{\Omega\bigl(\frac{x-y}{|x-y|}\bigl)}{|x-y |^{d-2}}d\mu(y),$$
fails to exist for $\mu$-almost every $x\in \R^d.$
\end{thm}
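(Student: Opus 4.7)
The plan is to show that $\{T_{r_n}(x)\}_{n \geq 1}$ fails to be Cauchy for $\mu$-almost every $x$, which precludes the existence of the principal value limit. Writing
\begin{equation*}
H_n(x) := T_{r_{n+1}}(x) - T_{r_n}(x) = \int_{r_{n+1} < |x-y| \leq r_n} K(x-y)\,d\mu(y),
\end{equation*}
the task reduces to exhibiting a constant $c > 0$ such that $|H_n(x)| > c$ for infinitely many $n$, $\mu$-a.e.

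Following the blueprint of the previous section---in particular Lemma \ref{kernelestimates} and the proof of Proposition \ref{mainprop}---one replaces $\mu$ restricted to each $B_l^{n+1}$ (for $l \neq l(x)$) by $\tfrac{1}{r_n^2}\chi_{Q_l^{n+1}}\,dm_d$, using that $\bigcup_l Q_l^{n+1}$ nearly fills $B^n(x)$.  After rescaling by $r_n$ and absorbing the self-cube contribution into the error, this yields an identity of the form
\begin{equation*}
H_n(x) = \Psi(u_n(x)) + O\!\left(\delta_n^{2\alpha/d}\right),
\end{equation*}
where $u_n(x) := (x - x_0^{(n)})/r_n \in B(0,1)$ is the scaled position of $x$ in $B^n(x)$, and
\begin{equation*}
\Psi(u) := -\int_{\{|w|<1\} \cap \{|w - u|\geq 1\}} K(w)\,dw.
\end{equation*}

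Two sub-claims then suffice.  First, $\Psi$ does not vanish identically on $B(0, 1)$: a polar-coordinate expansion produces a leading-order asymptotic $\Psi(u) \sim A(\hat u)|u|^{j_\Omega}$ near $u = 0$, where the integer $j_\Omega \geq 1$ and the angular profile $A$ on $\mathbb{S}^{d-1}$ depend on the spherical-harmonic decomposition of $\Omega$.  By the Funk--Hecke formula, $A$ is a linear combination of Gegenbauer projections of $\Omega$ with nonzero multipliers, so $A \not\equiv 0$ whenever $\Omega \not\equiv 0$.  Consequently the set $G := \{u \in B(0, 1) : |\Psi(u)| > c_0\}$ has positive Lebesgue measure for some $c_0 > 0$.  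Second, for $\mu$-a.e.\ $x$, $u_n(x) \in G$ for infinitely many $n$.  By the self-similar construction, the level-$(n{+}1)$ descent $l_{n+1}(x)$ is, under $\mu$, uniformly distributed over the $(r_n/r_{n+1})^{d-2}$ sub-balls of $B^n(x)$, and these choices are independent across scales.  Since $u_n(x)$ is essentially determined by $l_{n+1}(x)$ via the lattice of sub-cube centers, and this lattice approximates Lebesgue measure on $B(0,1)$ as $r_{n+1}/r_n \to 0$, one obtains $\mu\{u_n(x) \in G\} \geq \eta > 0$ for all sufficiently large $n$; the independence then produces the infinitely-often statement via the second Borel--Cantelli lemma.

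Combining these with \eqref{deltasum} to absorb the error term (so that $\delta_n^{2\alpha/d} < c_0/2$ for large $n$), we conclude $|H_n(x)| > c_0/2$ for infinitely many $n$, $\mu$-a.e., as required.  The main obstacle will be the first sub-claim: when $\Omega$ satisfies the moment condition \eqref{moment} and so has no low-degree spherical-harmonic components, identifying the leading nonzero Gegenbauer multiplier and verifying that the corresponding $A$ is not identically zero requires care; correspondingly, the exponent $j_\Omega$ may exceed $1$, and the rapid-decay hypothesis on $r_{n+1}/r_n$ is used to ensure the error $O(\delta_n^{2\alpha/d})$ is dominated by the main term.
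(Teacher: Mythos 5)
Your overall strategy runs parallel to the paper's: test the principal value on differences of truncations at the scales $r_n$, replace $\mu$ by the normalized Lebesgue measure $\frac{1}{r_n^2}m_d$ on the level-$(n+1)$ cubes via Lemma \ref{kernelestimates}, show that the resulting scale-invariant profile is bounded away from zero on a set of positive measure inside the unit ball, and then use the uniform distribution of sub-balls across scales to get the ``infinitely often'' statement $\mu$-a.e. (your Borel--Cantelli step is in substance the paper's iteration $\mu(F_n)\leq(1-d_0)^m$). However, your central identity $H_n(x)=\Psi(u_n(x))+O(\delta_n^{2\alpha/d})$ is not correct as stated, and this is a genuine gap. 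With the annulus $\{r_{n+1}<|x-y|\leq r_n\}$, the portion of $B^{n+1}(x)$ lying outside $B(x,r_{n+1})$ contributes to $H_n(x)$ an amount of size up to a constant: that region carries $\mu$-mass comparable to $r_{n+1}^{d-2}$ at distances comparable to $r_{n+1}$ from $x$, where $|K|\approx r_{n+1}^{-(d-2)}$. This ``self-ball'' term is of order one, depends on the geometry at scale $n+1$ and finer (it is not a function of $u_n(x)$), and Lemma \ref{kernelestimates} cannot be applied to the cube containing $x$, so it cannot be absorbed into the error. The paper avoids this by working with annuli $B(z,tr_n)\setminus B(z,sr_n)$ whose inner and outer radii are both comparable to $r_n$, so that every level-$(n+1)$ cube met by the annulus is far from $x$ relative to its side length (Claim \ref{claimforkernel}); your argument can be repaired in the same spirit, e.g.\ by splitting $H_n = [T_{r_{n+1}}-T_{3r_{n+1}}]+[T_{3r_{n+1}}-T_{r_n}]$ and noting that the first bracket must also tend to zero if the principal value exists, while the second bracket (whose annulus excludes $B^{n+1}(x)$ entirely) does admit your Lebesgue comparison.

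The second issue is that your sub-claim that $\Psi\not\equiv 0$, which you yourself identify as the main obstacle, is only sketched: the Funk--Hecke route requires verifying that the relevant Gegenbauer multipliers do not vanish, and this is not done. The paper dispenses with all of this by an elementary geometric observation: choose $e\in\mathbb{S}^{d-1}$ with $\Omega(e)\neq 0$ and $z_0=\epsilon e$; then $B(0,1)\cap[B(z_0,2)\setminus B(z_0,1)]=B(0,1)\setminus B(z_0,1)$ is a thin crescent on which the kernel is, by H\"{o}lder continuity, essentially the single nonzero value $\Omega(e)$, so the corresponding integral is nonzero; continuity in $z$ then furnishes a ball of positive measure on which the profile exceeds a fixed constant. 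The same crescent argument applies verbatim to your $\Psi(u)$ for small $u$ (where the integration region $\{|w|<1,\ |u-w|\geq 1\}$ is exactly such a crescent), so no spherical-harmonic machinery, and no discussion of the exponent $j_{\Omega}$ or of the moment condition \eqref{moment}, is needed. With these two repairs your argument would align with the paper's proof of Lemma \ref{lowerboundforkernel} and Theorem \ref{noprin}.
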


The main estimate is the following lemma:

\begin{lem}\label{lowerboundforkernel} 
There exists $c_0>0$ denpending on $\Omega$ such that, provided $n$ is sufficiently large, there is a ball $D_j^n \subset B_j^n$ with $m_d(D_j^n) \geq c_0 r_n^d$ such that if $z\in D_j^n$, then

\begin{equation*}
   |\int_{B(z,tr_n)\setminus B(z,sr_n)}K(x-\zeta)d\mu(\zeta)| \geq c_0.  
\end{equation*}
\end{lem}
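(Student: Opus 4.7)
The plan is to reduce to a Euclidean statement about annular Lebesgue integrals on the unit ball via rescaling, establish that statement using the non-vanishing of $\Omega$, and then transfer back to $\mu$ by iterating Lemma \ref{kernelestimates} as in the proof of Proposition \ref{mainprop}.

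\emph{Step 1 (Euclidean estimate).} The first goal is to exhibit constants $0 < s < t$ and a ball $W \subset B(0,1)$ of positive Lebesgue measure such that $|F(w,s,t)| \geq c_1$ for every $w \in W$, where $c_1 > 0$ depends only on $\Omega$ and
$$F(w,s,t) := \int_{(B(w,t) \setminus B(w,s)) \cap B(0,1)} K(w-u)\,dm_d(u).$$
Passing to polar coordinates $u = w + \rho\xi$ gives
$$F(w,s,t) = \int_s^t \rho\,G(w,\rho)\,d\rho, \qquad G(w,\rho) = \int_{\{\xi \in \mathbb{S}^{d-1}:\, w + \rho\xi \in B(0,1)\}} \Omega(\xi)\,d\sigma(\xi),$$
so $G(w,\rho) = 0$ whenever $\rho < 1-|w|$ (full sphere, mean-zero of $\Omega$), and otherwise $G$ integrates $\Omega$ over a proper spherical cap. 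Since $\Omega$ is continuous and not identically zero, choose $\xi_0 \in \mathbb{S}^{d-1}$ with $\Omega(\xi_0) > 0$; then $\Omega > \tfrac12 \Omega(\xi_0)$ on a spherical cap $C$ about $\xi_0$. Setting $w_0 = -(1-\beta)\xi_0$ for small $\beta > 0$, a direct computation shows that for $\rho$ in a short interval just below $2-\beta$ the integration cap $\{\xi : w_0 + \rho\xi \in B(0,1)\}$ shrinks to a small cap around $\xi_0$ contained in $C$; picking $[s,t]$ inside this $\rho$-interval yields $F(w_0,s,t) > 0$, and continuity of $F$ in $w$ propagates the lower bound to a ball $W$ about $w_0$.

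\emph{Step 2 (Transfer to $\mu$).} Write $z = z_j^n + r_n w$ and $\zeta = z_j^n + r_n u$; homogeneity of $K$ and $m_d$ gives
$$\int_{B(z, tr_n) \setminus B(z, sr_n)} K(z-\zeta)\,\frac{dm_d(\zeta)}{r_n^2}\bigg|_{\widetilde{B}_j^n} = F(w,s,t).$$
Property (iii) of the construction ensures the annulus $B(z,tr_n) \setminus B(z,sr_n)$ meets $\supp(\mu)$ only inside $B_j^n$, since sibling level-$n$ balls lie at distance $\gtrsim (r_n^{d-2} r_{n-1}^2)^{1/d} \gg tr_n$ once $r_n/r_{n-1}$ is small. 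To replace Lebesgue measure by $\mu$, iterate Lemma \ref{kernelestimates} at each scale $k > n$ inside $B_j^n$, exactly as in the proof of Proposition \ref{mainprop}: for each ``sibling'' sub-ball $B_{j'}^k$ not containing $z$, compare $\mu|_{\widetilde{B}_{j'}^k}$ with $r_n^{-2} m_d|_{Q_{j'}^k}$, and descend recursively into the sub-ball containing $z$. Combined with an elementary bound on the contribution from sub-balls straddling $\partial B(z, sr_n) \cup \partial B(z, tr_n)$ (a vanishing power of $r_{n+1}/r_n$, in the spirit of (\ref{measureofdifference})), the accumulated error is $O\big(\sum_{k > n}\delta_k^{2\alpha/d}\big)$, which by \eqref{deltasum} is less than $c_1/2$ for $n$ sufficiently large.

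\emph{Step 3 (The set $D_j^n$ and main obstacle).} Define $D_j^n = (z_j^n + r_n W) \setminus N$, where $N$ is a thin tubular neighbourhood of $\bigcup_{j'}\widetilde{B}_{j'}^{n+1}$ of thickness large enough that Lemma \ref{kernelestimates} applies at level $n+1$ for any $z \in D_j^n$. The excluded Lebesgue volume is $o(r_n^d)$ provided $r_{n+1}/r_n$ is small, so $m_d(D_j^n) \geq c_0 r_n^d$ with $c_0 = c_1/2$. The main obstacle is the recursive comparison in Step 2: one must assemble the estimates at all finer scales so that the accumulated error is uniformly controlled in $z \in D_j^n$, handling both the straddling sub-balls near the annulus boundaries and the recursion into sub-balls containing $z$, and verifying that the summability condition \eqref{deltasum} suffices to close the argument.
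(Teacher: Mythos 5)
Your Step 1 is fine (it is a more detailed version of the paper's brief remark that one can take $z_0$ close to $0$ in a suitable direction), and the overall skeleton — Euclidean lower bound, rescaling, then comparison of $\mu$ with $r_n^{-2}m_d$ on $B_j^n$ — is the paper's. The genuine gap is in Step 2, which is exactly the point you flag as the ``main obstacle'' and leave unassembled. The multi-scale recursion you sketch is both unnecessary and, as written, does not work: for $k>n+1$ the measures you propose to compare, $\mu|_{\widetilde{B}_{j'}^k}$ and $r_n^{-2}m_d|_{Q_{j'}^k}$, do not have equal total mass ($\mu(\widetilde{B}_{j'}^k)\approx r_k^{d-2}$ versus $r_k^{d-2}r_{k-1}^2/r_n^2$ up to constants), so Lemma \ref{kernelestimates} does not apply to that pair; and ``descending recursively into the sub-ball containing $z$'' is vacuous here, since the integration domain is an annulus at distance $\geq s r_n$ from $z$ and never meets the cube containing $z$. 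The paper's Claim \ref{claimforkernel} shows no recursion is needed: Lemma \ref{kernelestimates} permits $\nu_2$ to be an arbitrary measure of the right mass, so one applies it \emph{once}, at scale $n+1$, with $\nu_1=r_n^{-2}m_d|_{Q_l^{n+1}}$ and $\nu_2=\mu|_{B_l^{n+1}}$ (these masses do match), sums over the $\lesssim (r_n/r_{n+1})^{d-2}$ cubes in the annulus, treats the boundary-straddling cubes by the crude bound \eqref{kernelons}, and also controls the thin region of $B_j^n$ not covered by any cube $Q_l^{n+1}$ (volume $\lesssim \delta_{n+1}r_n^d$) — a term your sketch omits. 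This yields an error $\lesssim \delta_{n+1}^{\alpha}$, which is $<c_1/2$ for large $n$ without invoking the tail of \eqref{deltasum}.

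A secondary problem is Step 3. The excision of a neighbourhood $N$ of $\bigcup_{j'}\widetilde{B}_{j'}^{n+1}$ is not needed: the distance hypothesis of Lemma \ref{kernelestimates} is automatic because every cube entering the comparison lies in the annulus, hence at distance $\gtrsim s r_n\gg \ell(Q^{n+1}_l)$ from $z$. Moreover the excision produces a set that is not a ball (the statement asks for a ball), and if $N$ contains the slightly enlarged balls $B_{j'}^{n+1}$ then $D_j^n$ is disjoint from $\supp\mu\cap B_j^n$, which would ruin the use of the lemma in the proof of Theorem \ref{noprin}, where one needs a definite fraction of the balls $B_l^{n+1}$ to meet $D_j^n$. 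The correct choice, as in the paper, is simply $D_j^n=B(z_j^n+r_n z_0,\alpha r_n)$ with $\alpha$ coming from the continuity argument of your Step 1.
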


Before we exhibit the proof of this lemma, let us see how we get the Proposition \ref{noprin} using this. To observe this, set $F = \{z\in E: z\notin \cup_{j}D_j^n \quad\text{for all but finitely many n}\}$ and then it suffices to show $\mu(F) =0$. Also observe that by denoting by $F_n = \{z\in E: z\notin \cup_{j}D_j^m \quad \textit{for all} \quad m\geq n\} $, we have that $F\subset F_n$, so it will suffice to show that $\mu(F_n) =0$ for all $n.$

To show this, note that there exists $d_0>0$ such that for all sufficiently large $m\geq 0$, at most $(1-d_0)\bigl({r_m\over r_{m+1}}\bigl)^{d-2}$ cubes $Q_l^{m+1}$ fail to intersect $D_j^m$, thus 

\begin{align*}
&\mu\bigg(\bigcup_{l}\bigg\{B_{l}^{m+1}: B_{l}^{m+1}\cap D_j^m = \emptyset\bigg\}\bigg) \leq (1-d_0)\bigg({r_m\over r_{m+1}}\bigg)^{d-2}r_{m+1}^{d-2} \leq (1-d_0)\mu(B_j^m).
\end{align*}
Whenever $n$ is large enough, this inequality can be iterated to get
\begin{equation*}
    \mu(\{z\in E: z\notin D_j^{n+k} \quad \text{for} \quad k = 1, 2,......, m\}) \leq (1 - d_0)^m,
\end{equation*}
which shows that $\mu(F_n) = 0.$

In order to prove the Lemma \ref{lowerboundforkernel}, we use the following result, which relies on an application of Lemma \ref{kernelestimates}.

\begin{clm}\label{claimforkernel}
 For $r,s\in (0,2)$.  For $n \in \mathbb{Z}_{+}$ sufficiently large and any disc $B_j^n$, $z\in \mathbb{R}^d$, we have

\begin{equation*}
    \bigg|\int_{B^n_j\cap [B(z,tr_n)\setminus B(z,sr_n)]}K(x-\zeta)d(\mu - {m_d \over r_n^{2}})(\zeta)\bigg| \lesssim \delta_{n+1}^{\alpha}.
\end{equation*}
\end{clm}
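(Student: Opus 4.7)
The approach is to partition $\widetilde{B}_j^n$ by the $(n+1)$-generation cubes $Q_l^{n+1}$ --- each of side length $\ell:=\sqrt[d]{\kappa_d r_{n+1}^{d-2}r_n^2}$ and containing a ball $B_l^{n+1}$ that carries $\mu$-mass comparable to the mass assigned to $Q_l^{n+1}$ by $m_d/r_n^2$.  The two measures can then be compared cube by cube via Lemma \ref{kernelestimates} with $R=r_n$ and $r=r_{n+1}$, in the same spirit as the estimate of the term $A_3$ in the proof of Proposition \ref{mainprop}.  (We interpret the $x$ appearing in the claim as the point $z$, which is the relevant case for the application to Lemma \ref{lowerboundforkernel}.)

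The first step is to decompose $U:=B_j^n\cap [B(z,tr_n)\setminus B(z,sr_n)]$ as $S^+\sqcup(U\setminus S^+)$, where $S^+$ is the union of cubes $Q_l^{n+1}$ lying entirely inside the annulus.  On each cube in $S^+$, I would apply Lemma \ref{kernelestimates} with $\nu_1=\chi_{Q_l^{n+1}}(m_d/r_n^2)$ and $\nu_2=\chi_{B_l^{n+1}}\mu$; its hypothesis $\dist(x,Q_l^{n+1})\ge \ell/8$ holds because for $\zeta\in U$ one has $|x-\zeta|\ge sr_n\gg \ell$ once $n$ is large.  Summing the first error term of Lemma \ref{kernelestimates} over $l$ telescopes to
\begin{equation*}
(r_{n+1}^{d-2}r_n^2)^{\alpha/d}\int_{\widetilde{B}_j^n}\frac{dm_d(\zeta)/r_n^2}{|x-\zeta|^{d-2+\alpha}}\lesssim \delta_{n+1}^{\alpha}
\end{equation*}
(with an implicit constant depending on $s$), using $|x-\zeta|\ge sr_n$ and the $d$-dimensional growth of $m_d$.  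The second error term is bounded by $r_{n+1}^{\alpha}\cdot O(r_n^{-\alpha})\lesssim (r_{n+1}/r_n)^{\alpha}=O(\delta_{n+1}^{d\alpha/(d-2)})$, which is dominated by $\delta_{n+1}^{\alpha}$ since $d\ge 3$.

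For the boundary contribution from $U\setminus S^+$, I would show that both $(m_d/r_n^2)(U\setminus S^+)$ and $\mu(U\setminus S^+)$ are bounded by $Cr_n^{d-2}\delta_{n+1}$: the $m_d$-bound follows from $m_d(U\setminus S^+)\lesssim r_n^{d-1}\ell\sim r_n^d\delta_{n+1}$, while the $\mu$-bound comes from covering $\partial B(z,tr_n)\cap B_j^n$ and $\partial B(z,sr_n)\cap B_j^n$ by at most $C(r_n/\ell)^{d-1}$ cubes $Q_l^{n+1}$, each of which contributes $\mu$-mass $r_{n+1}^{d-2}$.  Combining these with the pointwise bound $|K(x-\zeta)|\le (sr_n)^{-(d-2)}$ valid on $U$ gives a boundary contribution of order $\delta_{n+1}\le \delta_{n+1}^{\alpha}$ (since $\alpha\le 1$), which completes the claim.

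The main technical obstacle is the $\mu$-mass estimate for the boundary layer: a direct application of $(d-2)$-growth to a tubular neighborhood of $\partial(B(z,tr_n)\setminus B(z,sr_n))$ alone does not give the required bound $Cr_n^{d-2}\delta_{n+1}$, so one must exploit the concentration of $\mu$ on the $(n+1)$-level balls together with the $(d-1)$-dimensional area of the two spheres bounding the annulus within $B_j^n$, and then verify that the resulting arithmetic indeed yields the required power of $\delta_{n+1}$.
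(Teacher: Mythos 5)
Your proposal is correct and follows essentially the same route as the paper: a cube-by-cube application of Lemma \ref{kernelestimates} (with $R=r_n$, $r=r_{n+1}$) for the cubes $Q_l^{n+1}$ lying fully inside the annulus, summed using $|z-\zeta|\gtrsim sr_n$ to give $\lesssim\delta_{n+1}^{\alpha}$, together with a crude treatment of the boundary cubes and the uncovered slack of measure $\lesssim\delta_{n+1}r_n^d$. The only difference is cosmetic: the paper counts the $\lesssim (r_n/r_{n+1})^{(d-2)(d-1)/d}$ boundary cubes and bounds each cube's integral separately (using \eqref{kernelons} for the slack), while you bound the total $m_d/r_n^2$- and $\mu$-mass of the boundary layer by $r_n^{d-2}\delta_{n+1}$ and multiply by $\sup|K|\lesssim (sr_n)^{-(d-2)}$ — the same counting in a slightly different package.
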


\begin{proof}
To prove this, let us first denote by $A(z, r_n) = B(z,tr_n)\setminus B(z,sr_n)$. Next we suppose that some cube $Q_l^{n+1}\in A(z, r_n)$. Then using Lemma \ref{kernelestimates}, we get 

\begin{equation*}
    \bigg|\int_{Q_l^{n+1}}K(x-\zeta)d(\mu - {m_d \over r_n^{2}})(\zeta)\bigg| \lesssim \Bigl(\frac{r_{n+1}}{r_n}\Bigl)^{d-2+\alpha}+ \Bigl(\frac{r_{n+1}}{r_n}\Bigl)^{(d-2)(1+\alpha/d)}\lesssim \Bigl(\frac{r_{n+1}}{r_n}\Bigl)^{(d-2)(1+\alpha/d)}.
\end{equation*}

If we instead have $Q_l^{n+1} \cap \partial A(z, r_n) \neq \emptyset$, then we have the crude estimate
\begin{equation*}
\bigg|\int_{Q_l^{n+1}\cap A(z, r_n)}K(x-\zeta)d(\mu - {m_d \over r_n^{2}})(\zeta)\bigg| \lesssim \Bigl(\frac{r_{n+1}}{r_n}\Bigl)^{d-2}.
\end{equation*}

There are at most $({r_n\over r_{n+1}})^{d-2}$ squares $Q_l^{n+1}$ contained in $A(z, r_n)$ and observe that no more than $C\bigl({r_n\over r_{n+1}}\bigl)^{(d-2)(d-1)/d}$ cubes $Q_l^{n+1}$ intersect $\partial A(z, r_n)$. 

On the other hand observe that the set $\tilde{A}$ consisting of points in $A(z, r_n) \cap B_j^n$ that are not contained in any cube $Q_l^{n+1}$ has measure that is no greater than $$m_d\Bigl(\Bigl[\bigcup\{Q^{n+1}_{\ell}: Q^{n+1}_{\ell}\subset (1+\delta_{n+1})B_j^n\}\Bigl]\triangle B^n_j\Bigl)\lesssim \delta_{n+1}r_n^d.$$ Whence, $\int_{\tilde{A}}|K(z-\zeta)|{dm_d(\zeta)\over r_n^2} \lesssim\delta_{n+1}.$

Bringing all these estimates together yields
$$ \bigg|\int_{B^n_j\cap [B(z,tr_n)\setminus B(z,sr_n)]}K(x-\zeta)d(\mu - {m_d \over r_n^{2}})(\zeta)\bigg|\lesssim \Bigl(\frac{r_{n+1}}{r_n}\Bigl)^{\alpha(d-2)/d}+ \Bigl(\frac{r_{n+1}}{r_n}\Bigl)^{(d-2)/d}+\delta_{n+1}\lesssim \delta_{n+1}^{\alpha},$$
as required.
\end{proof}

\begin{proof}If $\Omega$ does not vanish identically, then there exists $c>0$, $z_0\in B(0,1)$ and $r,s\in (0,1)$ such that
$$\int_{B(0,1)\cap [B(z_0, r)\backslash B(z_0, s)]}K(z-\zeta)dm_d(\zeta)\geq c.$$
(One can select $z_0$ close enough to $0$ in the correct direction, $r=2$ and $s=1$.)
Now observe next that have $\int_{B(0,1)\cap [B(z,tr_n)\setminus B(z,sr_n)]}K(z-\zeta)dm_d(\zeta)$ is a continuous function. Hence, then there exists some $\alpha >0$, such that if $z\in B(z_0, \alpha) \subset B(0,1)$ we have

\begin{equation*}
  \bigg|\int_{B(0,1)\cap [B(z,t)\setminus B(z,s)]}K(z-\zeta)dm_d(\zeta)\bigg| \geq {c\over 2}.   
\end{equation*}

Therefore, with $D_n^j = B(z_j^n+r_nz, \alpha r_n)$ we have

\begin{equation*}
  \bigg|\int_{B^{n}_j\cap B(z,tr_n)\setminus B(z,sr_n)}K(x-\zeta){dm_d(\zeta)\over r_n^2}\bigg| \geq \frac{c}{2} \text{ for every }z\in D^n_j.   
\end{equation*}

Now applying the Claim \ref{claimforkernel}, we deduce for all $z\in D_j^n$, $|\int_{B(z,tr_n)\setminus B(z,sr_n)}K(z-\zeta)d\mu(\zeta)| \geq c/2 - C\delta_{n+1}^{\alpha}$. Finally, note that the right hand side is atleast ${c\over 4}$ for $n$ sufficiently large, and Lemma \ref{lowerboundforkernel} is proved for a suitable choice of $c_0$.
\end{proof}

\medskip

\vspace{0.3cm}

Benjamin Jaye
School of Mathematics,
Georgia Institute of Technology,
Atlanta, GA USA 30332

\smallskip
{\it E-mail}: \texttt{bjaye3@gatech.edu}
\vspace{0.3cm}

Manasa N. Vempati,
School of Mathematics,
Georgia Institute of Technology,
Atlanta, GA USA 30332

\smallskip
{\it E-mail}: \texttt{nvempati6@gatech.edu}

\end{document}